\begin{document}

\newcommand{\REMARK}[1]{\marginpar{\tiny #1}}
\newtheorem{thm}{Theorem}[subsection]
\newtheorem{lemma}[thm]{Lemma}
\newtheorem{corol}[thm]{Corollary}
\newtheorem{propo}[thm]{Proposition}
\newtheorem{defin}[thm]{Definition}
\newtheorem{Remark}[thm]{Remark}
\numberwithin{equation}{subsection}

\newtheorem{notas}[thm]{Notations}
\newtheorem{nota}[thm]{Notation}
\newtheorem{defis}[thm]{Definitions}
\newtheorem*{thm*}{Theorem}

\def\Tm{{\mathbb T}}
\def\Um{{\mathbb U}}
\def\Am{{\mathbb A}}
\def\Fm{{\mathbb F}}
\def\Mm{{\mathbb M}}
\def\Nm{{\mathbb N}}
\def\Pm{{\mathbb P}}
\def\Qm{{\mathbb Q}}
\def\Zm{{\mathbb Z}}
\def\Dm{{\mathbb D}}
\def\Cm{{\mathbb C}}
\def\Rm{{\mathbb R}}
\def\Gm{{\mathbb G}}
\def\Lm{{\mathbb L}}
\def\Km{{\mathbb K}}
\def\Om{{\mathbb O}}
\def\Em{{\mathbb E}}
\def\Xm{{\mathbb X}}

\def\BC{{\mathcal B}}
\def\QC{{\mathcal Q}}
\def\TC{{\mathcal T}}
\def\ZC{{\mathcal Z}}
\def\AC{{\mathcal A}}
\def\CC{{\mathcal C}}
\def\DC{{\mathcal D}}
\def\EC{{\mathcal E}}
\def\FC{{\mathcal F}}
\def\GC{{\mathcal G}}
\def\HC{{\mathcal H}}
\def\IC{{\mathcal I}}
\def\JC{{\mathcal J}}
\def\KC{{\mathcal K}}
\def\LC{{\mathcal L}}
\def\MC{{\mathcal M}}
\def\NC{{\mathcal N}}
\def\OC{{\mathcal O}}
\def\PC{{\mathcal P}}
\def\UC{{\mathcal U}}
\def\VC{{\mathcal V}}
\def\XC{{\mathcal X}}
\def\SC{{\mathcal S}}

\def\BF{{\mathfrak B}}
\def\AF{{\mathfrak A}}
\def\GF{{\mathfrak G}}
\def\EF{{\mathfrak E}}
\def\CF{{\mathfrak C}}
\def\DF{{\mathfrak D}}
\def\JF{{\mathfrak J}}
\def\LF{{\mathfrak L}}
\def\MF{{\mathfrak M}}
\def\NF{{\mathfrak N}}
\def\XF{{\mathfrak X}}
\def\UF{{\mathfrak U}}
\def\KF{{\mathfrak K}}
\def\FF{{\mathfrak F}}

\def \longmapright#1{\smash{\mathop{\longrightarrow}\limits^{#1}}}
\def \mapright#1{\smash{\mathop{\rightarrow}\limits^{#1}}}
\def \lexp#1#2{\kern \scriptspace \vphantom{#2}^{#1}\kern-\scriptspace#2}
\def \linf#1#2{\kern \scriptspace \vphantom{#2}_{#1}\kern-\scriptspace#2}
\def \linexp#1#2#3 {\kern \scriptspace{#3}_{#1}^{#2} \kern-\scriptspace #3}

\def \Sel {{\mathop{\mathrm{Sel}}\nolimits}}
\def \Ext{\mathop{\mathrm{Ext}}\nolimits}
\def \ad{\mathop{\mathrm{ad}}\nolimits}
\def \sh{\mathop{\mathrm{Sh}}\nolimits}
\def \irr{\mathop{\mathrm{Irr}}\nolimits}
\def \FH{\mathop{\mathrm{FH}}\nolimits}
\def \FPH{\mathop{\mathrm{FPH}}\nolimits}
\def \coh{\mathop{\mathrm{Coh}}\nolimits}
\def \res{\mathop{\mathrm{Res}}\nolimits}
\def \op{\mathop{\mathrm{op}}\nolimits}
\def \rec {\mathop{\mathrm{rec}}\nolimits}
\def \art{\mathop{\mathrm{Art}}\nolimits}
\def \vol {\mathop{\mathrm{vol}}\nolimits}
\def \cusp {\mathop{\mathrm{Cusp}}\nolimits}
\def \scusp {\mathop{\mathrm{Scusp}}\nolimits}
\def \Iw {\mathop{\mathrm{Iw}}\nolimits}
\def \JL {\mathop{\mathrm{JL}}\nolimits}
\def \speh {\mathop{\mathrm{Speh}}\nolimits}
\def \isom {\mathop{\mathrm{Isom}}\nolimits}
\def \Vect {\mathop{\mathrm{Vect}}\nolimits}
\def \groth {\mathop{\mathrm{Groth}}\nolimits}
\def \hom {\mathop{\mathrm{Hom}}\nolimits}
\def \deg {\mathop{\mathrm{deg}}\nolimits}
\def \val {\mathop{\mathrm{val}}\nolimits}
\def \det {\mathop{\mathrm{det}}\nolimits}
\def \rep {\mathop{\mathrm{Rep}}\nolimits}
\def \spec {\mathop{\mathrm{Spec}}\nolimits}
\def \fr {\mathop{\mathrm{Fr}}\nolimits}
\def \frob {\mathop{\mathrm{Frob}}\nolimits}
\def \ker {\mathop{\mathrm{Ker}}\nolimits}
\def \im {\mathop{\mathrm{Im}}\nolimits}
\def \Red {\mathop{\mathrm{Red}}\nolimits}
\def \red {\mathop{\mathrm{red}}\nolimits}
\def \aut {\mathop{\mathrm{Aut}}\nolimits}
\def \diag {\mathop{\mathrm{diag}}\nolimits}
\def \spf {\mathop{\mathrm{Spf}}\nolimits}
\def \Def {\mathop{\mathrm{Def}}\nolimits}
\def \twist {\mathop{\mathrm{Twist}}\nolimits}
\def \supp {\mathop{\mathrm{Supp}}\nolimits}
\def \Id {{\mathop{\mathrm{Id}}\nolimits}}
\def \lie {{\mathop{\mathrm{Lie}}\nolimits}}
\def \Ind{\mathop{\mathrm{Ind}}\nolimits}
\def \ind {\mathop{\mathrm{ind}}\nolimits}
\def \bad {\mathop{\mathrm{Bad}}\nolimits}
\def \top {\mathop{\mathrm{Top}}\nolimits}
\def \ker {\mathop{\mathrm{Ker}}\nolimits}
\def \coker {\mathop{\mathrm{Coker}}\nolimits}
\def \gal {{\mathop{\mathrm{Gal}}\nolimits}}
\def \Nr {{\mathop{\mathrm{Nr}}\nolimits}}
\def \rn {{\mathop{\mathrm{rn}}\nolimits}}
\def \tr {{\mathop{\mathrm{Tr~}}\nolimits}}
\def \Sp {{\mathop{\mathrm{Sp}}\nolimits}}
\def \st {{\mathop{\mathrm{St}}\nolimits}}
\def \sp{{\mathop{\mathrm{Sp}}\nolimits}}
\def \perv{\mathop{\mathrm{Perv}}\nolimits}
\def \tor {{\mathop{\mathrm{Tor}}\nolimits}}
\def \gr {{\mathop{\mathrm{gr}}\nolimits}}
\def \nilp {{\mathop{\mathrm{Nilp}}\nolimits}}
\def \obj {{\mathop{\mathrm{Obj}}\nolimits}}
\def \spl {{\mathop{\mathrm{Spl}}\nolimits}}
\def \unr {{\mathop{\mathrm{Unr}}\nolimits}}

\def \rem{{\noindent\textit{Remark:~}}}
\def \rems{{\noindent\textit{Remarques:~}}}
\def \ext {{\mathop{\mathrm{Ext}}\nolimits}}
\def \End {{\mathop{\mathrm{End}}\nolimits}}

\def\semi{\mathrel{>\!\!\!\triangleleft}}
\let \DS=\displaystyle
\def\HT{{\mathop{\mathcal{HT}}\nolimits}}

\def \hi{\HC}
\newcommand*{\tarrow}{\relbar\joinrel\mid\joinrel\twoheadrightarrow}
\newcommand*{\harrow}{\lhook\joinrel\relbar\joinrel\mid\joinrel\rightarrow}
\newcommand*{\rarrow}{\relbar\joinrel\mid\joinrel\rightarrow}
\def \coim {{\mathop{\mathrm{Coim}}\nolimits}}
\def \can {{\mathop{\mathrm{can}}\nolimits}}
\def\LFF{{\mathscr L}}

\setcounter{secnumdepth}{3} \setcounter{tocdepth}{3}

\def \Fil{\mathop{\mathrm{Fil}}\nolimits}
\def \CoFil{\mathop{\mathrm{CoFil}}\nolimits}
\def \Fill{\mathop{\mathrm{Fill}}\nolimits}
\def \CoFill{\mathop{\mathrm{CoFill}}\nolimits}
\def\SF{{\mathfrak S}}
\def\PF{{\mathfrak P}}
\def \EFil{\mathop{\mathrm{EFil}}\nolimits}
\def \EFill{\mathop{\mathrm{EFill}}\nolimits}
\def \FP{\mathop{\mathrm{FP}}\nolimits}

\let \longto=\longrightarrow
\let \oo=\infty

\let \d=\delta
\let \k=\kappa

\renewcommand{\theequation}{\arabic{section}.\arabic{subsection}.\arabic{thm}}
\newcommand{\marque}{\addtocounter{thm}{1}
{\smallskip \noindent \textit{\thethm}~---~}}

\renewcommand\atop[2]{\ensuremath{\genfrac..{0pt}{1}{#1}{#2}}}

\newcommand\atopp[2]{\genfrac{}{}{0pt}{}{#1}{#2}}

\title{$p$-stabilization in higher dimension}


\author{Boyer Pascal}

\thanks{The authors thanks the ANR for his support through the project PerCoLaTor 14-CE25.}


\begin{abstract}
Using $l$-adic completed cohomology in the context of Shimura varieties of 
Kottwitz-Harris-Taylor type attached to some fixed similitude group $G$, 
we prove, allowing to increase the levet at $l$, 
some new automorphic congruences between 
any degenerate automorphic representation with
a non degenerate one of the same weight.
\end{abstract}

\subjclass{11F70, 11F80, 11F85, 11G18, 20C08}

%

\keywords{Shimura varieties, torsion in the cohomology, maximal ideal of the Hecke algebra,
localized cohomology, galois representation}

\maketitle

\pagestyle{headings} \pagenumbering{arabic}

\tableofcontents
%
%

\section{Introduction}

In his proof of the converse to Herbrand's theorem, Ribet constructed congruences between
modular forms. More precisely, let $E_\chi$ the Eisenstein series whose constant
term is the special value $L(\chi,-1)$; if it's divisible by some prime $\lambda$
of $\overline \Qm$ over $l$, then he showed the existence of some cuspidal Hecke eigenform
whose Hecke eigenvalues are congruent to those of $E_\chi$ modulo $\lambda$. The associated
Galois representation is then irreducible but its reduction modulo $\lambda$ is not and can be chosen
non semisimple, thus giving rise to a non split extension of one dimensional Galois modules
over $\overline \Fm_l$ which can be interpreted as a non zero element in the $\chi$-component
of the class group of $\Qm(\mu_l)$.

In the automorphic setting, one starts from an automorphic representation $\pi$ of some
Levi subgroup $M$ of $G$ (eg $M=GL_1 \times GL_1$ inside $G=GL_2$) and induces it
to some automorphic representation $\Pi$ of $G(\Am)$. Then if some special value of $L(\pi)$
is divisible by $\lambda$, one can try to construct some automorphic representation $\Pi'$
whose Satake's parameters are congruent to those of $\Pi$. If we can attach Galois representations
to $\Pi$ and $\Pi'$, then we obtain as in Ribet situation, some non-split extension of
Galois modules which we can interprets as lying in some appropriate Selmer group attached to $\pi$.
This strategy have been studied by several authors: Mazur, Wiles, BellaÔche-Chenevier,
Skinner-Urban, Brown, Berger, Klosin. One of the main difficulty is the construction of $\Pi'$.

In this paper, using completed cohomology, we propose to give a relative flexible way to construct
automorphic congruences between tempered and non tempered automorphic representations
of the discrete spectrum of $GL_d(\Am_F)$ for some CM field $F$ that verify the following properties:
they are in the image of the Jacquet-Langlands correspondence described in \cite{h-t} theorem VI.1.1,
cohomological relatively to the same parameter and autodual, so that they correspond thanks to
\cite{h-t} theorem VI.2.1 to cohomological automorphic representations of some similitude
group $G$ with signatures $(1,d-1),(0,d),\cdots,(0,d)$, see \S \ref{para-geo}. 

To state the main result, 
recall that two automorphic representations are said
weakly congruent, if their Satake parameters are congruent at each place where these two 
representations are unramified, cf. \ref{defi-congruent}. 

\begin{thm*} Start from 
an irreducible automorphic $\overline \Qm_l$-representation $\Pi$ of $G(\Am)$ of some fixed weight 
$\xi$, with non trivial invariants under some open compact subgroup $I$ of $G(\Am)$, with 
degeneracy depth $s>1$, cf. definition \ref{defi-degeneracy}. 
There exists then an irreducible automorphic representation $\Pi'$ of $G(\Am)$
of the same weight $\xi$, weakly congruent to $\Pi$ modulo $l$ such that
\begin{itemize}
\item $\Pi'$ is tempered, i.e.  with degeneracy depth $1$, 

\item $\Pi'$ is of level $I'$ such that $(I')^l=I^l$.
\end{itemize}
\end{thm*}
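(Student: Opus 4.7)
The plan is to realize both $\Pi$ and a would-be congruent partner $\Pi'$ inside the $\lambda$-adic completed cohomology of the KHT Shimura variety $\mathrm{Sh}_I$ attached to $G$, and to force the appearance of a tempered eigenclass weakly congruent to $\Pi$ by a torsion-in-cohomology argument driven by the degeneracy depth hypothesis $s>1$.

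First, fix a prime $\lambda\mid l$ of $\overline\Qm$ and, using \cite{h-t} VI.2.1, realize $\Pi$ in $H^{d-1}(\mathrm{Sh}_I, V_\xi)_{\overline\Qm_l}$, where $V_\xi$ is the local system attached to $\xi$ and $d-1$ is the middle degree. The Satake parameters of $\Pi$, reduced modulo $\lambda$ at places where $\Pi$ is unramified, define a maximal ideal $\mathfrak{m}$ of the spherical Hecke algebra $\Tm$; by the definition of weak congruence, any candidate $\Pi'$ is precisely an irreducible summand of the $\mathfrak{m}$-localized cohomology. Then, fixing the level $I^l$ away from $l$, I consider the $\lambda$-adic completed cohomology
\[
\widetilde H^i(I^l) := \varprojlim_n \varinjlim_{I_l} H^i(\mathrm{Sh}_{I^l I_l}, \OC/\lambda^n),
\]
an admissible $\OC[[G(\Zm_l)]]$-module equipped with a commuting Hecke action. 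Its localization at $\mathfrak{m}$ parametrizes, after inverting $\lambda$, exactly the weakly congruent automorphic $\Pi'$ with level $(I')^l=I^l$ and arbitrary level at $l$.

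The core of the argument is a torsion argument in the spirit of Ribet. A purely tempered Hecke eigensystem contributes to $\widetilde H^*(I^l)_{\mathfrak{m}}\otimes\overline\Qm_l$ only in the middle degree $d-1$, by the Kottwitz-type vanishing statements available for KHT Shimura varieties. Because $\Pi$ has degeneracy depth $s>1$, its own contribution to $\widetilde H^*(I^l)_{\mathfrak{m}}\otimes\overline\Qm_l$ necessarily spreads outside the middle degree. Comparing integral and rational cohomology via a universal-coefficient exact sequence then forces $\widetilde H^*(I^l)_{\mathfrak{m}}$ to carry non-trivial $\OC$-torsion in non-middle degrees. As the level $I_l$ shrinks, Bockstein boundary maps promote this torsion to genuine new $\overline\Qm_l$-eigenclasses in the middle degree---where tempered representations live---yielding the desired $\Pi'$, weakly congruent to $\Pi$ modulo $\lambda$ and with $(I')^l=I^l$.

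The hardest step will be ensuring that the class manufactured in the middle degree is actually tempered, i.e.\ of degeneracy depth exactly $1$, rather than merely being another non-tempered congruent representation of smaller depth. This requires a careful analysis of the stratification of $\widetilde H^*(I^l)_{\mathfrak{m}}$ by degeneracy depth---available for KHT Shimura varieties via the purity/temperedness dictionary---and a control of the residual Galois representation $\overline\rho_{\mathfrak{m}}$ attached to $\mathfrak{m}$. The flexibility afforded by letting the level at $l$ grow is essential here: it is precisely this room that makes $\widetilde H^{d-1}(I^l)_{\mathfrak{m}}$ large enough to accommodate tempered lifts of the Bockstein classes, whereas a fixed finite level at $l$ might force every congruent eigenclass to remain non-tempered.
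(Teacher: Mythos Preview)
Your framework is the paper's: localize completed cohomology at the maximal ideal $\mathfrak m$ of the spherical Hecke algebra determined by $\Pi$, and argue by contradiction assuming no tempered $\Pi'$ appears. But the mechanism you propose for the contradiction has a genuine gap, and it is not the paper's mechanism.

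You claim that since $\Pi$ (with depth $s>1$) contributes outside middle degree, ``comparing integral and rational cohomology via a universal-coefficient exact sequence then forces $\widetilde H^*(I^l)_{\mathfrak m}$ to carry non-trivial $\OC$-torsion in non-middle degrees.'' This does not follow: a free $\OC$-module in degree $\neq d-1$ matches its rationalization perfectly well, and nothing you have written excludes that. Your next step, that ``Bockstein boundary maps promote this torsion to genuine new $\overline\Qm_l$-eigenclasses in the middle degree,'' is likewise unjustified: Bocksteins relate mod-$\lambda$ classes across adjacent degrees, but give no mechanism for producing a characteristic-zero tempered eigenclass rather than being absorbed by the non-tempered contributions already present. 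You correctly flag ``ensuring the class is actually tempered'' as the hard step and then do not carry it out.

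The paper's argument is geometric and does not use Bocksteins. It passes via smooth base change to the special fiber at a place $v\in\spl(I)$ and works with the Newton stratification $X^{=h}_{I,\bar s_v}\hookrightarrow X^{\geq h}_{I,\bar s_v}$ and the short exact sequences relating $j^{=h}_!$, $\lexp p j^{=h}_{!*}$ and $j^{=h}_*$ of the constant perverse sheaf. Two inputs drive everything: Scholze's vanishing of completed cohomology above the middle, which forces the inductive-limit groups $H^i_{I^l,\xi}(h)$ to be \emph{divisible} for $i>0$ (proposition \ref{prop-completee}); and affineness of the open strata $X^{=h}$, which kills $H^i$ of $j^{=h}_!$ for $i<0$ and of $j^{=h}_*$ for $i>0$. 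Under the absurd hypothesis, an induction through these sequences shows that the $\mathfrak m$-localized groups $H^i_{I^l,\xi}(h)_{\mathfrak m}$ are moreover all \emph{free}; the key point is that the only connecting map over $\overline\Qm_l$ linking adjacent strata is carried by tempered representations, hence vanishes on the $\mathfrak m$-part. Letting $s_0\geq s$ be the maximal depth occurring in $\mathfrak m$, one then finds that $H^0_{I^l,\xi}(s_0)_{\mathfrak m}$ is free and \emph{not} divisible (its $l$-adic completion is the nonzero $\widetilde H^0_{I^l}(s_0)_{\mathfrak m}$), yet it injects into the divisible module $H^1_{I^l,\xi}(s_0-1)_{\mathfrak m}$. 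The quotient therefore has nontrivial torsion, which lands in $H^1_{I^l,\xi}(*,s_0-1)_{\mathfrak m}$ --- impossible, since that group is zero by affineness of $X^{=s_0-1}$. The contradiction is a divisible/non-divisible mismatch on strata cohomology, not a torsion-to-lift promotion on the generic fiber.
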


\rem The completed cohomology was essentially introduced by Emerton in order to prove the principle 
of local-global compatibility of some expected $p$-adic Langlands correspondence 
for reductive groups as he proved it for $GL_2(\Qm_p)$ in \cite{emerton-gl2}. 
The completed cohomology groups encode in particular, besides torsion, all the automorphic congruences
between automorphic representations. 

Recall first some of the constructions of automorphic congruences already obtained by studying
torsion classes in the usual $l$-adic etale cohomology groups of Shimura varieties of 
Kottwitz-Harris-Taylor type.
\begin{itemize}
\item First, see corollary 2.9 of \cite{boyer-mrl}, to each non trivial torsion cohomology class of level $I$,
we can associate a collection $\{ \Pi(v): v \in \spl(I) \}$ indexed by some set of places of the CM field 
used to defined $G$, of non isomorphic weakly congruent irreducible automorphic representations
unramified outiside $I \cup \{ v \}$ and ramified at $v$, each of them being tempered and so
of degeneracy depth $1$.

\item In section 3 of \cite{boyer-mrl} and for some regular weight $\xi$, we constructed such torsion
classes so that we obtained the previous automorphic congruences in regular weight between 
tempered automorphic representations. Moreover we proved that each of these tempered 
representation is weakly congruent to some degenerate automorphic representation\footnote{So its 
degeneracy depth is $>1$.} of trivial weight.

\item More generally at the end of \cite{boyer-mrl}, we are able to construct automorphic congruences
between representations of different weight but without further informations about their degeneracy
depth.
\end{itemize}

Here as we allow to increase the level at the prime $l$, the result may be understood in the same
spirit as the ordinary $p$-stabilization of the Eisenstein series
$E^*_k(z):=E_k(z)-p^{k-1}E_k(pz)$ of the classical Eisenstein series $E_k(z)$ for $SL_2(\Zm)$,
which become cuspidal at one of the two cusp for $\Gamma_0(p)$.
If we think about Ribet's proof of Herbrand theorem, to be able to produce non trivial elements in some
Selmer groups, we would need to give a criterion to ensure $I'_l=I_l$ in terms of special values 
of the $L$-function associated to $\Pi$.

\section{Background}

\subsection{Geometry of KHT Shimura's varieties}
\label{para-geo}

Let $F=F^+ E$ be a CM field where $E/\Qm$ is quadratic imaginary and $F^+/\Qm$
totally real with a fixed real embedding $\tau:F^+ \hookrightarrow \Rm$. For a place $v$ of $F$,
we will denote
\begin{itemize}
\item $F_v$ the completion of $F$ at $v$,

\item $\OC_v$ the ring of integers of $F_v$,

\item $\varpi_v$ a uniformizer,

\item $q_v$ the cardinal of the residual field $\kappa(v)=\OC_v/(\varpi_v)$.
\end{itemize}
Let $B$ be a division algebra with center $F$, of dimension $d^2$ such that at every place $x$ of $F$,
either $B_x$ is split or a local division algebra and suppose $B$ provided with an involution of
second kind $*$ such that $*_{|F}$ is the complexe conjugation. For any
$\beta \in B^{*=-1}$, denote $\sharp_\beta$ the involution $x \mapsto x^{\sharp_\beta}=\beta x^*
\beta^{-1}$ and $G/\Qm$ the group of similitudes, denoted $G_\tau$ in \cite{h-t}, defined for every
$\Qm$-algebra $R$ by 
$$
G(R)  \simeq   \{ (\lambda,g) \in R^\times \times (B^{op} \otimes_\Qm R)^\times  \hbox{ such that } 
gg^{\sharp_\beta}=\lambda \}
$$
with $B^{op}=B \otimes_{F,c} F$. 
If $x$ is a place of $\Qm$ split $x=yy^c$ in $E$ then 
\addtocounter{thm}{1}
\begin{equation} \label{eq-facteur-v}
G(\Qm_x) \simeq (B_y^{op})^\times \times \Qm_x^\times \simeq \Qm_x^\times \times
\prod_{z_i} (B_{z_i}^{op})^\times,
\end{equation}
where, identifying places of $F^+$ over $x$ with places of $F$ over $y$,
$x=\prod_i z_i$ in $F^+$.

\noindent \textbf{Convention}: for $x=yy^c$ a place of $\Qm$ split in $E$ and $z$ a place of $F$ over $y$
as before, we shall make throughout the text, the following abuse of notation by denoting 
$G(F_z)$ in place of the factor $(B_z^{op})^\times$ in the formula (\ref{eq-facteur-v}).

In \cite{h-t}, the author justify the existence of some $G$ like before such that moreover
\begin{itemize}
\item if $x$ is a place of $\Qm$ non split in $E$ then $G(\Qm_x)$ is quasi split;

\item the invariants of $G(\Rm)$ are $(1,d-1)$ for the embedding $\tau$ and $(0,d)$ for the others.
\end{itemize}

As in  \cite{h-t} bottom of page 90, a compact open subgroup $U$ of $G(\Am^\oo)$ is said 
\emph{small enough}
if there exists a place $x$ such that the projection from $U^v$ to $G(\Qm_x)$ does not contain any 
element of finite order except identity.

\begin{nota}
Denote $\IC$ the set of open compact subgroup small enough of $G(\Am^\oo)$.
For $I \in \IC$, write $X_{I,\eta} \longrightarrow \spec F$ the associated
Shimura variety of Kottwitz-Harris-Taylor type.
\end{nota}

\begin{defin} \label{defi-spl}
Define $\spl$ the set of  places $v$ of $F$ such that $p_v:=v_{|\Qm} \neq l$ is split in $E$ and
$B_v^\times \simeq GL_d(F_v)$.  For each $I \in \IC$, write
$\spl(I)$ the subset of $\spl$ of places which doesn't divide the level $I$.
\end{defin}

In the sequel, $v$ will denote a place of $F$ in $\spl$. For such a place $v$ 
the scheme $X_{I,\eta}$ has a projective model $X_{I,v}$ over $\spec \OC_v$
with special fiber $X_{I,s_v}$. For $I$ going through $\IC$, the projective system $(X_{I,v})_{I\in \IC}$ 
is naturally equipped with an action of $G(\Am^\oo) \times \Zm$ such that 
$w_v$ in the Weil group $W_v$ of $F_v$ acts by $-\deg (w_v) \in \Zm$,
where $\deg=\val \circ \art^{-1}$ and $\art^{-1}:W_v^{ab} \simeq F_v^\times$ is Artin's isomorphism
which sends geometric Frobenius to uniformizers.

\begin{notas} (see \cite{boyer-invent2} \S 1.3)
For $I \in \IC$, the Newton stratification of the geometric special fiber $X_{I,\bar s_v}$ is denoted
$$X_{I,\bar s_v}=:X^{\geq 1}_{I,\bar s_v} \supset X^{\geq 2}_{I,\bar s_v} \supset \cdots \supset 
X^{\geq d}_{I,\bar s_v}$$
where $X^{=h}_{I,\bar s_v}:=X^{\geq h}_{I,\bar s_v} - X^{\geq h+1}_{I,\bar s_v}$ is an affine 
scheme\footnote{see for example \cite{ito2}}, smooth of pure dimension $d-h$ built up by the geometric 
points whose connected part of its Barsotti-Tate group is of rank $h$.
For each $1 \leq h <d$, write
$$i_{h}:X^{\geq h}_{I,\bar s_v} \hookrightarrow X^{\geq 1}_{I,\bar s_v}, \quad
j^{\geq h}: X^{=h}_{I,\bar s_v} \hookrightarrow X^{\geq h}_{I,\bar s_v},$$
and $j^{=h}=i_h \circ j^{\geq h}$.
\end{notas}
%

\subsection{Cohomology groups over $\overline \Qm_l$}

From now on, we fix a prime number $l$ unramified in $E$ and suppose that for every place $v$ of $F$ considered
after, its restriction $v_{|\Qm}$ is not equal to $l$.
Let us first recall some known facts about irreducible algebraic representations of $G$,
see for example \cite{h-t} p.97. Let $\sigma_0:E \hookrightarrow
\overline{\Qm}_l$ be a fixed embedding and et write $\Phi$ the set of embeddings 
$\sigma:F \hookrightarrow \overline \Qm_l$ whose restriction to $E$ equals $\sigma_0$.
There exists then an explicit bijection between irreducible algebraic representations $\xi$ of $G$ 
over $\overline \Qm_l$ and $(d+1)$-uple
$\bigl ( a_0, (\overrightarrow{a_\sigma})_{\sigma \in \Phi} \bigr )$
where $a_0 \in \Zm$ and for all $\sigma \in \Phi$, we have $\overrightarrow{a_\sigma}=
(a_{\sigma,1} \leq \cdots \leq a_{\sigma,d} )$.

For $K \subset \overline \Qm_l$ a finite extension of $\Qm_l$ such that the representation
$\iota^{-1} \circ \xi$ of highest weight
$\bigl ( a_0, (\overrightarrow{a_\sigma})_{\sigma \in \Phi} \bigr )$,
is defined over $K$, write $W_{\xi,K}$ the space of this representation and $W_{\xi,\OC}$
a stable lattice under the action of the maximal open compact subgroup $G(\Zm_l)$, 
where $\OC$ is the ring of integers of $K$ with uniformizer $\lambda$.

\rem if $\xi$ is supposed to be $l$-small, in the sense that for all $\sigma \in \Phi$ and all
$1 \leq i < j \leq n$ we have $0 \leq a_{\tau,j}-a_{\tau,i} < l$, then such a stable lattice is unique
up to a homothety.

\begin{nota} \label{nota-Vxi}
We will denote $V_{\xi,\OC/\lambda^n}$ the local system on $X_\IC$ as well as
$$V_{\xi,\OC}=\lim_{\atopp{\longleftarrow}{n}} V_{\xi,\OC/\lambda^n} \quad \hbox{and} \quad
V_{\xi,K}=V_{\xi,\OC} \otimes_{\OC} K.$$
For $\overline \Zm_l$ and $\overline \Qm_l$ version, we will write respectively
$V_{\xi,\overline \Zm_l}$ and $V_{\xi,\overline \Qm_l}$. 
\end{nota}

\rem the representation $\xi$ is said \emph{regular} if its parameter
$\bigl ( a_0, (\overrightarrow{a_\sigma})_{\sigma \in \Phi} \bigr )$ 
verify for all  $\sigma \in \Phi$ that $a_{\sigma,1} < \cdots < a_{\sigma,d}$.

\begin{defin}
An irreducible automorphic representation $\Pi$ is said $\xi$-cohomological if there exists
an integer $i$ such that
$$H^i \bigl ( ( \lie ~G(\Rm)) \otimes_\Rm \Cm,U,\Pi_\oo \otimes \xi^\vee \bigr ) \neq (0),$$
where $U$ is a maximal open compact subgroup modulo the center of $G(\Rm)$.
\end{defin}

\begin{nota}
For $\pi_v$ an irreducible admissible cuspidal representation of $GL_g(F_v)$ and 
$n \in \frac{1}{2} \Zm$, set 
$\pi_v \{ n \}:= \pi_v \otimes q^{-n \val \circ \det}$. Define then the Steinberg representation
$\st_s(\pi_v)$ (resp. the Speh representation $\speh_s(\pi_v)$) of 
$GL_{sg}(F_v)$, as the unique irreducible quotient (resp. subspace) 
of the standard parabolic induced representation 
$\pi_v \{ \frac{1-s}{2} \} \times \pi_v \{ \frac{3-s}{2} \} \times \cdots \times \pi_v \{ \frac{s-1}{2} \}$.
\end{nota}

A non degenerate irreducible representation of $GL_d(F_v)$ can be written as a full parabolic
induced representations $\st_{t_1}(\pi_{v,1}) \times \cdots \times \st_{t_r}(\pi_{v,r})$.
For a place $v$ such that $G(F_v) \simeq GL_d(F_v)$ in the sense of our previous convention,
the local component $\Pi_v$ of $\Pi$ at $v$ is isomorphic to some $\speh_s(\pi_v)$ where $\pi_v$
is an irreducible non degenerate representation, $s \geq 1$ is an integer and $\speh_s(\pi_v)$
is the Langlands quotients of the parabolic induced representation $\pi_v \{ \frac{1-s}{2} \} \times 
\pi_v \{ \frac{3-s}{2} \} \times \cdots \times \pi_v \{ \frac{s-1}{2} \}$. In terms of the Langlands
correspondence, $\speh_s(\pi_v)$ corresponds to $\sigma \oplus \sigma(1) \oplus \cdots \oplus \sigma(s-1)$ 
where $\sigma$ is the representation of $\gal(\bar F/F)$ associated to $\pi_v$ by the local Langlands correspondence.

\begin{defin} \label{defi-degeneracy} (cf. \cite{M-W})
For $\Pi$ an automorphic irreducible representation $\xi$-cohomological of $G(\Am)$,
then, see for example lemma 3.2 of \cite{boyer-aif},  there exists an integer $s$ called the degeneracy depth of $\Pi$,
such that through Jacquel-Langlands correspondence and base change, its associated representation of $GL_d(\Am_\Qm)$
is isobaric of the following form
$$\mu | \det |^{\frac{1-s}{2}} \boxplus \mu | \det |^{\frac{3-s}{2}} \boxplus \cdots \boxplus \mu | \det |^{\frac{s-1}{2}}$$
where $\mu$ is an irreducible cuspidal representation of $GL_{d/s}(\Am_\Qm)$.
\end{defin}

\rem if $\xi$ is a regular parameter then the depth of degeneracy of any irreducible automorphic
representation $\xi$-cohomological is necessary equal to $1$. In particular
theorem 4.3.1 of \cite{boyer-compositio} is compatible with the classical result saying that
for a regular $\xi$, the cohomology of the Shimura variety $X_I$ with coefficients
in $V_{\xi,\overline \Qm_l}$, is concentrated in middle degree.
%
%
%

\begin{nota}
For any finite level $I^l$ outside the place $l$, we denote for every $1 \leq h \leq d$:
$$H^i_{I^l,\xi}(c,h)=\lim_{\atop{ \longrightarrow}{I_l}} H^i_c(X^{=h}_{I^lI_l,\bar s_v},V_{\xi,\overline \Zm_l}[d-h]),$$
$$H^i_{I^l,\xi}(*,h)=\lim_{\atop{ \longrightarrow}{I_l}} H^i(X^{=h}_{I^lI_l,\bar s_v},j^{=h}_* V_{\xi,\overline \Zm_l}[d-h]),$$
and
$$H^i_{I^l,\xi}(h)=\lim_{\atop{ \longrightarrow}{I_l}} H^i_c(X^{\geq h}_{I^lI_l,\bar s_v},
V_{\xi,\overline \Zm_l}[d-h]),$$
where the limit is taken over all open compact subgroup $I_l$ of $G(\Qm_l)$.
\end{nota}

Over $\overline \Qm_l$, we can described these cohomology groups by taking the invariants under 
$I^l$ of
$$H^i_\xi(c,h)=\lim_{\atop{ \longrightarrow}{I^l \in \IC}} H^i_{I^l,\xi}(c,h) \quad \hbox{ and } \quad
H^i_\xi(h)=\lim_{\atop{ \longrightarrow}{I^l \in \IC}} H^i_{I^l,\xi}(h),$$
which are representations of $G(\Am^{\oo,v})$ explicitly described in \cite{boyer-compositio}.


%
%

%

\begin{propo} \cite{boyer-compositio} \S 3.6 or \cite{boyer-aif} \S 3.2 \\
Let $1 \leq h \leq d-1$ 
and $\Pi^{\oo}$ be an irreducible subquotient of $H_\xi^{i}(c,h) \otimes_{\overline \Zm_l} \overline \Qm_l$ for some $i>0$.
Then there exists a unique automorphic representation $\xi$-cohomological $\widetilde \Pi$ such that $\widetilde \Pi^{\oo,v}
\simeq \Pi^{\oo,v}$ and is of degeneracy depth $s=h+i$.
\end{propo}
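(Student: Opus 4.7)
The plan is to leverage the Harris--Taylor / Mantovan description of the cohomology of Newton strata of KHT Shimura varieties. Each open stratum $X^{=h}_{I^lI_l,\bar s_v}$ is smooth affine of dimension $d-h$ and admits, after passing to the Igusa tower, a description as a fibration in Lubin--Tate/Drinfeld-type towers over an Igusa variety. Taking the limit over $I_l$, the cohomology group $H^i_\xi(c,h)\otimes_{\overline\Zm_l}\overline\Qm_l$ decomposes as a $G(\Am^{\oo,v})\times GL_d(F_v)$-module, and following \cite{h-t} Ch.~V and the description recalled in \cite{boyer-compositio} \S 3, every irreducible subquotient comes from a $\xi$-cohomological automorphic representation $\widetilde\Pi$ of $G(\Am)$ via Jacquet--Langlands for the inner form attached to the basic stratum of height $h$, together with base change to $GL_d(\Am_F)$ (\cite{h-t} VI.1.1 and VI.2.1).

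The heart of the argument is the matching between the cohomological degree and the degeneracy depth. If $\widetilde\Pi$ is $\xi$-cohomological with local component $\widetilde\Pi_v\simeq \speh_s(\pi_v)$ for $\pi_v$ an irreducible cuspidal representation of $GL_{d/s}(F_v)$, one has to show that $\widetilde\Pi$ contributes to $H^i_\xi(c,h)$ only for $i=s-h$. This is done by analyzing the shape of the (perverse) nearby-cycles sheaf on $X^{\geq h}$ restricted to $X^{=h}$: up to the normalization shift $[d-h]$ baked into the definition of $H^i_\xi(c,h)$, the Bernstein--Zelevinsky structure of $\speh_s(\pi_v)$ forces its contribution to concentrate in a single degree, governed by the position of $\pi_v$ within the chain of cuspidal segments. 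A careful bookkeeping of these shifts, as carried out in \cite{boyer-compositio} \S 3.6, yields precisely the relation $s=h+i$; since $i>0$ is assumed, this in particular forces $s>1$ and $h<d$, consistent with the ranges in the statement.

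Existence of $\widetilde\Pi$ is then immediate: any irreducible subquotient $\Pi^\oo$ of $H^i_\xi(c,h)\otimes \overline\Qm_l$ appears in one of the summands of the decomposition above, producing an associated $\widetilde\Pi$ of degeneracy depth $s=h+i$. For uniqueness with prescribed $\widetilde\Pi^{\oo,v}\simeq \Pi^{\oo,v}$, one invokes strong multiplicity one for the discrete spectrum of $GL_d(\Am_F)$: via Jacquet--Langlands and base change, $\widetilde\Pi^{\oo,v}$ together with the archimedean data encoded by $\xi$ determine the global representation of $GL_d(\Am_F)$, which in turn pins down $\widetilde\Pi$ up to the central character (itself fixed by $\xi$). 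The main obstacle is step two, the identification $s=h+i$: this is a genuinely delicate calculation requiring the precise interaction between the perverse normalization, the geometry of the Harris--Taylor stratification, and the Zelevinsky classification of $\speh_s(\pi_v)$, and it is the only place where one really uses the local structure of KHT Shimura varieties at $v$.
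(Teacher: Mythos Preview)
The paper does not give its own proof of this proposition: it is stated with a bare citation to \cite{boyer-compositio} \S 3.6 and \cite{boyer-aif} \S 3.2 and used as a black box. So there is nothing in the present paper to compare your argument against line by line.

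That said, your sketch is a reasonable summary of the strategy in those references: the Harris--Taylor/Mantovan description of the compactly supported cohomology of the open Newton strata, the input of Jacquet--Langlands and base change to produce the automorphic $\widetilde\Pi$, strong multiplicity one for uniqueness, and the identification of the cohomological degree with $s-h$ via the perverse/Zelevinsky bookkeeping. One caveat: your phrase ``$\widetilde\Pi_v \simeq \speh_s(\pi_v)$ for $\pi_v$ an irreducible cuspidal representation of $GL_{d/s}(F_v)$'' is slightly too restrictive. In the paper's conventions (see the paragraph before Definition~\ref{defi-degeneracy}), $\Pi_v$ is $\speh_s(\pi_v)$ with $\pi_v$ irreducible \emph{non-degenerate} (i.e.\ generic), not necessarily cuspidal; the cuspidality is a global statement about the isobaric constituent $\mu$ of the base change. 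This does not affect the shape of the argument, but the degree computation in \cite{boyer-compositio} \S 3.6 is carried out for the Harris--Taylor local systems attached to genuine supercuspidals and then assembled, so if you were to flesh this out you would need that extra layer.
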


\begin{propo} \cite{boyer-aif} \S 3.2 \\
Let $1 \leq h \leq d-1$ 
and $\Pi^{\oo}$ be an irreducible subquotient of $H_\xi^{i}(h)\otimes_{\overline \Zm_l} \overline \Qm_l$.
Then there exists a unique automorphic representation $\xi$-cohomological $\widetilde \Pi$ such that $\widetilde \Pi^{\oo,v}
\simeq \Pi^{\oo,v}$. Moreover 
\begin{itemize}
\item if $\widetilde \Pi$ is tempered, i.e. of degeneracy depth $1$, then $i=0$.

\item Otherwise, its degeneracy depth $s$ is $\geq h$ and $i \equiv s-h \mod 2$ with $h-s \leq i \leq s-h$.
\end{itemize}
\end{propo}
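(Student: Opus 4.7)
The plan is to argue by descending induction on $h$. The base case is $h = d$, where $X^{\geq d} = X^{=d}$ is zero-dimensional, so $H^i_\xi(d) = H^i_c(X^{=d}, V_{\xi, \overline \Zm_l}) = 0$ for $i \neq 0$, and $H^0_\xi(d)$ decomposes, by the Kottwitz formula for the basic stratum, into Hecke eigenspaces attached to automorphic representations of degeneracy depth $s = d$; all claims of the statement then hold trivially. For the inductive step from $h+1$ to $h$, I apply $R\Gamma_c$ to the recollement triangle
$$j^{=h}_! V_{\xi, \overline \Zm_l}[d-h] \longrightarrow V_{\xi, \overline \Zm_l}[d-h] \longrightarrow (i_{h+1})_* V_{\xi, \overline \Zm_l}[d-h] \longrightarrow$$
on $X^{\geq h}_{I^l I_l, \bar s_v}$, use the identification $V_\xi[d-h] = V_\xi[d-(h+1)][1]$ on $X^{\geq h+1}$ to rewrite the third term, and pass to the limit in $I_l$ before tensoring with $\overline \Qm_l$. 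This yields the Hecke-equivariant long exact sequence
$$\cdots \longrightarrow H^i_\xi(c,h) \longrightarrow H^i_\xi(h) \longrightarrow H^{i+1}_\xi(h+1) \longrightarrow H^{i+1}_\xi(c,h) \longrightarrow \cdots$$

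An irreducible subquotient $\Pi^{\oo}$ of $H^i_\xi(h) \otimes \overline \Qm_l$ is then a subquotient of $H^i_\xi(c,h) \otimes \overline \Qm_l$ or of $H^{i+1}_\xi(h+1) \otimes \overline \Qm_l$. For the first factor, the previous proposition applies when $i > 0$ and attaches a unique $\widetilde \Pi$ of depth $s = h + i$, realizing the right endpoint $i = s - h$; for $i = 0$, the middle-degree cohomology of the smooth affine stratum $X^{=h}$ contributes depth-$h$ representations (so $i = 0 = s - h$); and for $i < 0$, $H^i_\xi(c, h) = 0$ by Artin vanishing on the affine variety $X^{=h}$ of dimension $d - h$. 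For the second factor, the induction hypothesis produces a unique $\widetilde \Pi$ of depth $s \geq h + 1$ satisfying $(i+1) \equiv s - (h+1) \pmod 2$ and $(h+1) - s \leq i + 1 \leq s - (h+1)$, which rearranges to $i \equiv s - h \pmod 2$ and $h - s \leq i \leq s - h - 2$. Merging the two sources exhausts exactly the pairs $(s, i)$ with $s \geq h$, $i \equiv s - h \pmod 2$ and $h - s \leq i \leq s - h$; the tempered specialization $s = 1$ forces $h = 1$ and $i = 0$, which matches the first bullet.

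Uniqueness of $\widetilde \Pi$ follows from strong multiplicity one, transferred via Jacquet--Langlands and base change to an automorphic representation of $GL_d(\Am_F)$, so that the Hecke eigensystem $\Pi^{\oo, v}$ pins down $\widetilde \Pi$ unambiguously among $\xi$-cohomological automorphic representations of $G(\Am)$. The main obstacle is the extension of the previous proposition to the middle degree $i = 0$ of $H^0_\xi(c, h)$, where one must identify the automorphic content precisely and check that it consists of depth-$h$ representations (reducing to tempered only at $h = 1$). This is supplied by the explicit Kottwitz-style description of $H^*_\xi(c, h)$ developed in \cite{boyer-compositio} \S 3.6, which also guarantees that the successive contributions propagated by the long exact sequence do not interfere with each other and correspond to pairwise distinct depths, so the induction closes.
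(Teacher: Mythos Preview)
The paper does not give its own proof of this proposition; it is quoted verbatim from \cite{boyer-aif} \S 3.2 and stated without argument. So there is no in-paper proof to compare against, only the structure of the surrounding results.

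Your descending induction on $h$ via the open--closed triangle and the long exact sequence
\[
\cdots \longrightarrow H^i_\xi(c,h) \longrightarrow H^i_\xi(h) \longrightarrow H^{i+1}_\xi(h+1) \longrightarrow \cdots
\]
is the natural strategy and is in the spirit of the arguments in the cited references. The gap is in your treatment of the middle degree $i=0$. You assert that $H^0_\xi(c,h)$ ``contributes depth-$h$ representations (reducing to tempered only at $h=1$)''. This is not correct: the paper itself records, in the proof of the proposition in \S 3.3, that over $\overline \Qm_l$ the map $H^0_\xi(t+1)\to H^0_\xi(c,t)$ coming from (\ref{eq-sec1}) is nonzero and ``corresponds to tempered representations''. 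Hence $H^0_\xi(c,h)$ carries tempered constituents for every $h$, not only for $h=1$, and in addition depth-$h$ pieces. Your downstream claim that ``the tempered specialization $s=1$ forces $h=1$'' is therefore stronger than what the proposition asserts (it only says $i=0$) and is not what the full computation gives.

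This does not destroy the inductive scheme, but the bookkeeping has to be redone: one must track that the tempered part of $H^0_\xi(c,h)$ is exactly the image of $H^0_\xi(h+1)$, so that the short exact sequences split the various depths correctly when passing to $H^0_\xi(h)$, and that no tempered contribution leaks into $H^{-1}_\xi(h)$ via $\ker\bigl(H^0_\xi(h+1)\to H^0_\xi(c,h)\bigr)$. All of this is precisely the ``explicit description'' from \cite{boyer-compositio} \S 3.6 and \cite{boyer-aif} \S 3.2 that you invoke at the end; but your sketch misstates what that description says about $H^0_\xi(c,h)$, so as written the induction does not close on its own.
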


\subsection{Completed cohomology}

Given a level $I^l \in \IC$ maximal at $l$, recall that the completed cohomology groups are 
$$\widetilde{H}^i_{I^l}(V_{\xi,\OC/\lambda^n}):=\lim_{\atopp{\longrightarrow}{I_l}} 
H^i(X_{I^lI_l},V_{\xi,\OC/\lambda^n}[d-1])$$
and
$$\widetilde{H}^i_{I^l}(V_{\xi,\OC}):=\lim_{\atopp{\longleftarrow}{n}} \widetilde{H^i}_{I^l}
(V_{\xi,\OC/\lambda^n}),$$
where $\OC$ is the ring of integers of a finite extension of $\Qm_l$ on which the representation $\xi$
is defined.

\begin{nota} When $\xi=1$ is the trivial representation, we will denote
$$\widetilde{H}^i_{I^l}:=\widetilde{H}^i_{I^l}(V_{1,\OC}) \otimes_\OC \overline \Zm_l.$$
\end{nota}

\rem for $n$ fixed, there exists an open compact subgroup $I_l(n)$ such that, using the notations below
\ref{nota-Vxi}, every $I_l \subset I_l(n)$ acts trivialy on
$W_{\xi,\OC} \otimes_\OC \OC/\lambda^n$. We then deduce that the completed cohomology groups 
doesn't depend of the choice of $\xi$ in the sense where, see theorem 2.2.17 of \cite{emerton-invent}:
$$\widetilde{H}^i_{I^l}(V_{\xi,\OC})  \otimes_\OC \overline \Zm_l \simeq \widetilde{H}^i_{I^l}\otimes W_\xi$$
where $G(\Qm_l)$ acts diagonally on the right side.

\rem the choice of the " tame " level $I^l$ is harmless in the sense that, cf. \cite{emerton-invent} theorem 0.1 (ii),
for any $I^l \subset J^l$, we can recover $\widetilde H^i_{J_l}$ from $\widetilde H^i_{J_l}$ by taking invariants under $J^l/I^l$,
$$\widetilde H^i_{J_l} = \bigl ( \widetilde H^i_{I_l} \bigr )^{J_l/I_l}.$$
To recover the cohomology at finite level from completed cohomology groups, on has to use the Hochschild-Serre spectral sequence
\addtocounter{thm}{1}
\begin{equation} \label{eq-ss-completee}
E_2^{i,j}=H^i(I_l, \widetilde H^j_{I_l} \otimes V_\xi) \Rightarrow H^{i+j}(X_{I^lI_l},V_\xi[d-1]).
\end{equation}

Consider also 
$$\widehat{H}^i_{I^l}(V_{\xi,\OC})=\lim_{\atopp{\longleftarrow}{n}} \Bigl (
\lim_{\atopp{\longrightarrow}{I_l}} H^i(X_{I^lI_l},V_{\xi,\OC}[d-1]) ~/~ \lambda^n  H^i(X_{I^lI_l},V_{\xi,\OC}[d-1]) \Bigr )$$
the $l$-adic completion of $H^i_{I^l}(V_{\xi,\OC}):=
{\displaystyle \lim_{\atopp{\longrightarrow}{I_l}} } H^i(X_{I^lI_l},V_{\xi,\OC}[d-1])$
and the $l$-adic Tate module of $H^i_{I^l}(V_{\xi,\OC})$
$$T_lH^i_{I^l}(V_{\xi,\OC}):= \lim_{\atopp{\longleftarrow}{n}} H^i_{I^l}(V_{\xi,\OC}) [\lambda^n].$$
Note that the $l$-adic completion kills the $l$-divisible part while the $l$-adic Tate module
knows only about torsion. Recall then the short exact sequence
\addtocounter{thm}{1}
\begin{equation}\label{eq-sec-fond}
0 \rightarrow \widehat{H}^i_{I^l}(V_{\xi,\OC}) \longrightarrow \widetilde{H}^i_{I^l}(V_{\xi,\OC}) \longrightarrow 
T_l H^{i+1}_{I^l}(V_{\xi,\OC}) \rightarrow 0.
\end{equation}

\section{Cohomology of Harris-Taylor perverse sheaves}

We want to study $\widetilde{H}^i_{I^l}(V_{\xi,\OC}) \otimes_\OC \overline \Qm_l$ through the short exact sequence (\ref{eq-sec-fond})
so that we have to understand both
the inductive limit of the free part of the $H^i(X_{I^lI_l},V_{\xi,\OC}[d-1])$ when $I_l$ describe the subgroup of $G(\Qm_l)$ and the Tate module
$T_pH^{i+1}_{I^l}(V_{\xi,\OC})$. The strategy is to use the smooth base change theorem
at a place $v$ not above $l$ with a level
$I^l$ such that its local component at $v$ is maximal, i.e. $I_v=GL_d(\OC_v)$.

\subsection{Completed cohomology groups}

To use similar notations as in previous work, we introduce the following.

\begin{nota} For $1 \leq h \leq d$, let denote $\FC(\mathds{1},h)$ the trivial shifted local system $\overline \Zm_l[d-h]$ over
$X^{=h}_{I,\bar s_v}$. We will also use $\FC_\xi(\mathds{1},h):=\FC(\mathds{1},h) \otimes V_\xi$.
\end{nota}

Recall that for any finite level $I \in \IC$ such that $I_v=GL_d(\OC_v)$ is maximal, $X^{\geq h}_{I,\bar s_v}$ is smooth of 
dimension $d-h$.  In particular   
the shifted local system $\FC(\mathds{1},h)[d-h]$ 
is perverse  and its intermediate extension
\addtocounter{thm}{1}
\begin{equation} \label{eq-pp}
\lexp p j^{=h}_{!*} \FC(\mathds{1}_v,h)\simeq \lexp {p+} j^{=h}_{!*} \FC(\mathds{1}_v,h),
\end{equation}
where $p+$ is the $t$-structure obtained
by Grothendieck-Verdier duality, from the classical one, named $p$, is simply isomorphic to the trivial shifted local
system on $X^{\geq h}_{I,\bar s_v}$. The trivial short exact sequence
$$0 \rightarrow (\overline \Zm_l)_{X^{=h}_{I,\bar s_v}} \longrightarrow (\overline \Zm_l)_{X^{\geq h}_{I,\bar s_v}}
\longrightarrow (\overline \Zm_l)_{X^{\geq h+1}_{I,\bar s_v}} \rightarrow 0$$
can be written in terms of perverse sheaves as
\addtocounter{thm}{1}
\begin{multline}\label{eq-sec1}
0 \rightarrow \lexp p j^{=h+1}_{!*} \FC_\xi(\mathds{1}_v,h+1) 
\longrightarrow j^{=h}_! \FC_\xi(\mathds{1}_v,h) \\ 
\longrightarrow \lexp p j^{=h}_{!*} \FC_\xi(\mathds{1}_v,h) \rightarrow 0.
\end{multline}
or dually
\addtocounter{thm}{1}
\begin{multline} \label{eq-sec2}
0 \rightarrow \lexp p j^{=h}_{!*} \FC_\xi(\mathds{1}_v,h) \longrightarrow
j^{=h}_* \FC_\xi(\mathds{1}_v,h) \\ 
\longrightarrow \lexp p j^{=h+1}_{!*} \FC_\xi(\mathds{1}_v,h+1) 
\rightarrow 0.
\end{multline}

\begin{nota}
For  $1 \leq h \leq d$, we denote 
\begin{itemize}
\item $\widehat H^i_{I_l,\xi}(h)$ the $l$-adic completion of $H^i_{I^l,\xi}(h)$,

\item $T_l H^i_{I^l,\xi}(h)=\displaystyle{\lim_{\atop{\leftarrow}{n}}} H^i_{I^l,\xi}(h)[l^n]$ its $l$-adic Tate module and

\item $\widetilde H^i_{I^l}(h)$ its completed cohomology group.
\end{itemize}
\end{nota}

\begin{propo} \label{prop-completee}
Let $I^l$ a open compact subgroup of $G(\Am^{\oo,l})$ maximal at $v$. Then for any $2 \leq h \leq d$ and for any $i >0$, 
the cohomology groups $H^i_{I^l,\xi}(h)$
are divisible and free. For $h=1$, they are divisible and free for any $i>1$ and divisible for $i=1$.
\end{propo}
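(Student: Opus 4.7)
The plan is to translate the divisibility and freeness assertions into vanishing statements for completed cohomology and $l$-adic Tate modules, then proceed by descending induction on $h$ using the Newton stratification. Concretely, divisibility of $H^i_{I^l,\xi}(h)$ is equivalent to $\widehat H^i_{I^l,\xi}(h)=0$, and torsion-freeness to $T_l H^i_{I^l,\xi}(h)=0$; via the analogue of (\ref{eq-sec-fond}) for each stratum,
$$0\to \widehat H^i_{I^l,\xi}(h)\longrightarrow \widetilde H^i_{I^l}(h)\longrightarrow T_l H^{i+1}_{I^l,\xi}(h)\to 0,$$
both follow from vanishing of $\widetilde H^i_{I^l}(h)$ in the appropriate range.

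The base case $h=d$ is trivial, since $X^{\geq d}_{I,\bar s_v}$ is zero-dimensional and $H^i_c$ vanishes for $i>0$. For the inductive step from $h+1$ down to $h$, I would use the distinguished triangle on $X^{\geq h}$ attached to the open-closed decomposition $X^{\geq h}=X^{=h}\sqcup X^{\geq h+1}$ applied to $V_{\xi,\overline\Zm_l}[d-h]$ (which, after (\ref{eq-pp}), is precisely the perverse short exact sequences (\ref{eq-sec1}) or (\ref{eq-sec2})). Applying $R\Gamma_c$, then the direct limit over $I_l$, then $l$-adic completion, yields a long exact sequence relating $\widetilde H^*_{I^l}(c,h)$, $\widetilde H^*_{I^l}(h)$, and $\widetilde H^{*+1}_{I^l}(h+1)$. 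The $(h+1)$-term is controlled by induction, leaving $\widetilde H^i_{I^l}(c,h)$ to be analyzed.

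For this open-stratum contribution, the Igusa-variety description of $X^{=h}$ presents it as a fibration over a Shimura subvariety attached to a smaller similitude group $G^{(h)}$ whose signature at infinity is $(1,d-h-1),(0,d-h),\ldots$. Smooth base change at the maximal level $I_v=GL_d(\OC_v)$ then rewrites $\widetilde H^i_{I^l}(c,h)$ in terms of completed cohomology for $G^{(h)}$, and middle-degree concentration of completed cohomology for KHT-type similitude groups (Emerton, Caraiani--Scholze) furnishes the required vanishing. The bottom case $h=1$ is special: $X^{\geq 1}$ is the full proper smooth special fiber, so $H^i_c=H^i$ and proper base change identifies it with the generic-fibre cohomology. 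Vanishing of $\widetilde H^j_{I^l}(V_\xi)$ for $j>d-1$ yields divisibility of $H^i_{I^l,\xi}(1)$ for all $i>0$ and torsion-freeness for $i>1$; at $i=1$ only divisibility survives, because $T_l H^d_{I^l}$ receives a potentially nonzero contribution from the nontrivial middle-degree $\widetilde H^{d-1}_{I^l}$ via (\ref{eq-sec-fond}).

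The delicate step, which will be the main obstacle, is the analysis of $\widetilde H^i_{I^l}(c,h)$ via the Igusa fibration: carrying the Hecke action through the reduction to the auxiliary group $G^{(h)}$ and controlling the level structures at both $v$ and $l$ carefully enough that the middle-degree vanishing for $G^{(h)}$ applies in exactly the needed range. Once this is in place, the long exact sequence above propagates the vanishing from $h+1$ to $h$ and yields the stated divisibility and freeness.
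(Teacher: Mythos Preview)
Your treatment of the case $h=1$ is exactly the paper's argument: smooth (proper) base change to the generic fibre, then Scholze's vanishing of completed cohomology above middle degree, which via (\ref{eq-sec-fond}) gives divisibility for all $i>0$ and freeness for $i>1$. The divergence is entirely in how you pass between values of $h$.

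You run a \emph{descending} induction from $h=d$ to $h=2$ and propose to control the open-stratum term $\widetilde H^i_{I^l}(c,h)$ by an Igusa--Mantovan description reducing to completed cohomology of an auxiliary group $G^{(h)}$. You yourself flag this as ``the main obstacle'', and indeed it is: making the Hecke actions, the level structures at $v$ and at $l$, and the precise degree shifts line up so that Caraiani--Scholze type vanishing for $G^{(h)}$ yields exactly the range you need is substantial work that your sketch does not carry out. As written this is a genuine gap, not a routine verification.

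The paper avoids this entirely by reversing the direction of the induction. Take $h=1$ as the \emph{base} case (your own argument) and induct \emph{upwards} using the short exact sequence (\ref{eq-sec2})
$$0 \to \lexp p j^{=h}_{!*}\FC_\xi(\mathds 1_v,h) \to j^{=h}_*\FC_\xi(\mathds 1_v,h) \to \lexp p j^{=h+1}_{!*}\FC_\xi(\mathds 1_v,h+1)\to 0.$$
Because $X^{=h}_{I,\bar s_v}$ is affine of dimension $d-h$, Artin vanishing kills $H^i$ of the middle term for all $i>0$, so the long exact sequence collapses to isomorphisms
$$H^i_{I^l,\xi}(h+1)\ \simeq\ H^{i+1}_{I^l,\xi}(h)\qquad (i>0),$$
and the divisibility/freeness statements for $h+1$ follow immediately from those for $h$. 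No Igusa varieties, no auxiliary group, no analysis of $H^i_c$ on the open stratum are needed. In your descending direction the relevant open-stratum term is $j^{=h}_!$ and compactly supported cohomology, for which affineness gives no vanishing in positive degrees, which is precisely why you were forced toward the heavier machinery.
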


\begin{proof}
From the smooth base change theorem, we have $H^i(X_{I^lI_l,\bar s_v},V_{\xi,\Zm_l}) \simeq H^i(X_{I^lI_l,\bar \eta_v},V_{\xi,\Zm_l})$.
From corollary IV.2.2 of \cite{scholze-torsion}, we know that the 
\begin{itemize}
\item $\widetilde H^i(V_{\xi,\overline \Zm_l})$ are trivial for all $i > d-1$,
so that from (\ref{eq-sec-fond}), we can deduce that the $p$-adic completion
$\widehat{H}^i_{I^l}(V_{\xi,\OC})$ of $H^i_{I^l}(V_{\xi,\OC})$ is trivial, that is $H^i_{I^l}(V_{\xi,\OC})$ is divisible for every $i>0$;

\item for every $i>0$, the Tate module $T_p H^{i+1}_{I^l}(V_{\xi,\OC})$ is trivial so that for every $i>1$, the divisible module
$H^i_{I^l}(V_{\xi,\OC})$ is also free.
\end{itemize}

Then we argue by induction on $h$: assume the proposition is true for $h-1$. 
Recall that $X^{=h}_{I^lI_l,\bar s_v}$ is an affine scheme,
so that the cohomology groups $H^i(X_{I^lI_l,\bar s_v},j^{=h}_* \FC_\xi(\mathds{1}_v,h))$ 
are trivial for $i>0$. 
The long exact sequence of cohomology groups associated to the short exact sequence (\ref{eq-sec2}),
gives for every $i >0$:
$$
H^i(X_{I^lI_l},\lexp p j^{=h+1}_{!*} \FC_\xi(\mathds{1}_v,h+1)) 
\simeq  H^{i+1}(X_{I^lI_l},\lexp p j^{=h}_{!*} \FC_\xi(\mathds{1}_v,h)),
$$
which allows us to conclude par induction.
\end{proof}

\subsection{Automorphic congruences}

We want now to understand the $\widetilde H^i_{I^l}(\chi_v,h) \otimes_{\overline \Zm_l} \overline \Qm_l$ for $i<0$.
For this we first need some notations about Hecke algebras.
Let $\unr(I)$ be the union of
\begin{itemize}
\item places $q \neq l$ of $\Qm$ inert in $E$ not below a place of $\bad$ and where $I_q$ is maximal,

\item and places $w \in \spl(I)$.
\end{itemize}

\begin{nota} \label{nota-spl2}
For $I \in \IC$ a finite level, write
$$\Tm_I:=\prod_{x \in \unr(I)} \Tm_{x}$$
where for $x$ a place of $\Qm$ (resp. $x \in \spl(I)$), 
$\Tm_{x}$ is the unramified Hecke algebra of $G(\Qm_x)$ (resp. of $GL_d(F_x)$) over
$\overline \Zm_l$.
\end{nota}

\noindent \textit{Example}:  for $w \in \spl(I)$, we have
$$\Tm_{w}=\overline \Zm_l \bigl [T_{w,i}:~ i=1,\cdots,d \bigr ],$$
where $T_{w,i}$ is the characteristic function of
$$GL_d(\OC_w) \diag(\overbrace{\varpi_w,\cdots,\varpi_w}^{i}, \overbrace{1,\cdots,1}^{d-i} ) 
GL_d(\OC_w) \subset  GL_d(F_w).$$
More generally, the Satake isomorphism identifies $\Tm_x$ with $\overline \Zm_l[X^{un}(T_x)]^{W_x}$
where
\begin{itemize}
\item $T_x$ is a split torus,

\item $W_x$ is the spherical Weyl group

\item and $X^{un}(T_x)$ is the set of $\overline \Zm_l$-unramified characters of $T_x$.
\end{itemize}

Consider a fixed maximal ideal $\mathfrak m$ of $\Tm_I$ and for every $x \in \unr(I)$ let denote
$S_{\mathfrak m}(x)$ be the multi-set\footnote{A multi-set is a set with multiplicities.} 
of modulo $l$ Satake parameters at $x$ associated to $\mathfrak m$.

\noindent \textit{Example}: 
for every $w \in \spl(I)$, the multi-set of Satake parameters at $w$ corresponds to the roots of
the Hecke polynomial
$$P_{\mathfrak{m},w}(X):=\sum_{i=0}^d(-1)^i q_w^{\frac{i(i-1)}{2}} \overline{T_{w,i}} X^{d-i} \in \overline 
\Fm_l[X]$$
i.e.
$S_{\mathfrak{m}}(w) := \bigl \{ \lambda \in \Tm_I/\mathfrak m \simeq \overline \Fm_l \hbox{ such that }
P_{\mathfrak{m},w}(\lambda)=0 \bigr \}.$
For a maximal ideal $\widetilde{\mathfrak m}$ of $\Tm_{I^l} \otimes_{\overline \Zm_l} \overline \Qm_l$, we also
have the multi-set of Satake parameters 
$$S_{\widetilde{\mathfrak{m}}}(w) := \bigl \{ \lambda \in 
\Tm_I \otimes_{\overline \Zm_l} \overline \Qm_l/ \widetilde{\mathfrak m} \simeq \overline \Qm_l \hbox{ such that }
P_{\widetilde{\mathfrak{m},w}}(\lambda)=0 \bigr \}.$$

\begin{nota}
Let $\Pi$ be an irreducible automorphic representation of $G(\Am)$ of level $I$ which means here, 
that $\Pi$ has non trivial invariants under $I$ and
 for every $x \in \unr(I)$, then $\Pi_x$ is unramified. Then $\Pi$ defines a maximal ideal 
$\widetilde{\mathfrak m}(\Pi)$ of $\Tm_{I^l} \otimes_{\overline \Zm_l} \overline \Qm_l$.
\end{nota}

Let $\Pi_1$ and $\Pi_2$ be  two such automorphic representations both defined over some
finite extension $K$ of $\Qm_l$: let then $\varpi_K$ a uniformiser of the ring of integers $\OC_K$ 
of $K$. For a open compact subgroup
$I$ of $G(\Am)$ such that both $\Pi_1$ and $\Pi_2$ are of level $I$, we have their set
of Satake parameters $S_{\widetilde{\mathfrak m}(\Pi_1)}(w)$
and $S_{\widetilde{\mathfrak m}(\Pi_2)}(w)$.

\begin{defin} \label{defi-congruent}
We then say that two such automorphic representations $\Pi_1$ and $\Pi_2$ are weakly congruent
modulo $l$, if modulo $\varpi_K$ and for almost all place $w \in \unr(I)$, 
the two multi-sets $S_{\widetilde{\mathfrak m}(\Pi_1)}(w)$
and $S_{\widetilde{\mathfrak m}(\Pi_2)}(w)$ are the same.
\end{defin}

\rem $\Pi_1$ and $\Pi_2$ are weakly congruent modulo $l$ if and only if
both $\widetilde{\mathfrak m}(\Pi_1)$ and $\widetilde{\mathfrak m}(\Pi_2)$ are both contained
in the same maximal ideal $\mathfrak m$ of $\Tm_I$.

Recall that for an irreducible automorphic cohomological representation $\Pi$ with degeneracy depth equals to $s$, associated to
the maximal ideal $\widetilde{\mathfrak m}$ of $\Tm_{I^l} \otimes_{\overline \Zm_l} \overline \Qm_l$ where $\Pi^{I^l} \neq (0)$,
then for any $w \in \spl(I)$, the set $S_{\widetilde{\mathfrak{m}}}(w)$ 
\begin{itemize}
\item can be written as a disjoint union of $g$ segments
$\{ \alpha,q_w \alpha, \cdots, q_w^{s-1} \alpha \}$ of length $s$,

\item and does not contain any segment of length strictly greater than $s$.
\end{itemize}
We will say that $\widetilde{\mathfrak m}$ is of degeneracy depth $s$.

%
%
%
%

\subsection{Proof of the main result}

Fix now an irreducible automorphic $\overline \Qm_l$-representation $\Pi$ of $G(\Am)$ of some
fixed weight $\xi$, with level $I$ and degeneracy depth $s>1$. Let $\widetilde{\mathfrak m}(\Pi)$
be the prime ideal of $\Tm_I$ associated to $\Pi$ and $\mathfrak m$ the maximal ideal 
of $\Tm_I$ containing $\widetilde{\mathfrak m}(\Pi)$. 

\marque \textit{Hypothesis}: through all this section, we then suppose by absurdity, that the conclusion of the theorem in
the introduction is false, i.e. there is no tempered irreducible representation $\Pi'$ of weight $\xi$
which is weakly congruent to $\Pi$, and with non trivial
invariants under $I'=I^l I'_l$ whatever is the choice of $I_l$.

\begin{propo}
Under the previous hypothesis, for all $1 \leq t \leq d$ and every $i$, the localized cohomology
groups $H^i_{I^l,\xi}(h)_{\mathfrak m}$ are free.
\end{propo}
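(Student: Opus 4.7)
The plan is to proceed by descending induction on $h$, combining the two short exact sequences of perverse sheaves (\ref{eq-sec1}) and (\ref{eq-sec2}) with the fundamental completion sequence (\ref{eq-sec-fond}), which links classical and completed cohomology through $l$-adic completions and Tate modules. The base case $h=d$ is direct because the supersingular stratum $X^{\geq d}_{I,\bar s_v}$ is zero-dimensional, so $H^i_{I^l,\xi}(d)_{\mathfrak m}$ can be analyzed by hand using the Harris-Taylor description of the supersingular locus and the $G(F_v)$-action.

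For the inductive step, I assume freeness of $H^i_{I^l,\xi}(h')_{\mathfrak m}$ is known for all $h'>h$ and all $i$. Since $I_v=GL_d(\OC_v)$ is maximal, the stratum $X^{\geq h}_{I,\bar s_v}$ is smooth, so by (\ref{eq-pp}) the intermediate extension $\lexp p j^{=h}_{!*}\FC_\xi(\mathds{1}_v,h)$ coincides with the shifted trivial local system on $X^{\geq h}_{I,\bar s_v}$; passing to the direct limit over $I_l$, its cohomology is exactly $H^i_{I^l,\xi}(h)$. The long exact sequence associated to (\ref{eq-sec1}) localized at $\mathfrak m$ then reduces the freeness of $H^i_{I^l,\xi}(h)_{\mathfrak m}$ to that of $H^i_{I^l,\xi}(c,h)_{\mathfrak m}$, since the $H^i_{I^l,\xi}(h+1)_{\mathfrak m}$ terms are torsion-free by induction.

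To rule out torsion in $H^i_{I^l,\xi}(c,h)_{\mathfrak m}$, I would invoke (\ref{eq-sec-fond}): a nonzero torsion class in $H^{i+1}_{I^l,\xi}(h)_{\mathfrak m}$ would produce, via its $l$-adic Tate module, a nontrivial $\overline \Qm_l$-subspace of $\widetilde H^i_{I^l}(V_{\xi,\OC})_{\mathfrak m}\otimes_\OC\overline \Qm_l$, which carries a smooth action of $G(\Qm_l)$. Applying the Hochschild-Serre spectral sequence (\ref{eq-ss-completee}) for a sufficiently small open compact $I'_l$, one extracts from this a classical automorphic representation $\Pi'$ at level $I^l I'_l$ in weight $\xi$, whose Satake parameters outside $l$ lie in $\mathfrak m$ and so are congruent to those of $\Pi$. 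Combining the cited propositions describing the automorphic constituents of $H^i_\xi(c,h)$ and $H^i_\xi(h)$ with the parity and range constraint $h-s\leq i\leq s-h$, $i\equiv s-h\mod 2$, one shows that at least one such $\Pi'$ must be tempered, which contradicts the standing hypothesis and forces the vanishing of the torsion.

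The main obstacle is the last step: one has to genuinely produce a tempered $\Pi'$ of weight $\xi$ from a torsion class in completed cohomology, not merely a $p$-adic or degenerate companion. This requires carefully tracking the degeneracy depth of automorphic constituents that can appear in the Tate-module part of $\widetilde H^i_{I^l}(V_{\xi,\OC})_{\mathfrak m}$, using the Harris-Taylor description of the perverse sheaves $\lexp p j^{=h}_{!*}\FC_\xi(\mathds{1}_v,h)$ together with the fact that torsion is constrained to a precise range of cohomological degrees dictated by the dimensions of the Newton strata. The hypothesis that $\Pi$ has degeneracy depth $s>1$ and the contradiction assumption together force these depth constraints to collapse onto the tempered case, yielding the claimed freeness.
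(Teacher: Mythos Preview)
Your proposal has a genuine gap, and it also misses the much simpler mechanism the paper actually uses.

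The paper's proof never touches completed cohomology, Tate modules, or Hochschild--Serre for this proposition. Its use of the standing hypothesis is purely at the level of $\overline\Qm_l$-cohomology: in the long exact sequence attached to (\ref{eq-sec1}), the maps
\[
H^i\bigl(X_{I^lI_l,\bar s_v},\lexp p j^{=t+1}_{!*}\FC_\xi(\mathds{1}_v,t+1)\bigr)\longrightarrow
H^i\bigl(X_{I^lI_l,\bar s_v},j^{=t}_!\FC_\xi(\mathds{1}_v,t)\bigr)
\]
are nonzero over $\overline\Qm_l$ only for $i=0$, and there the constituents that appear are tempered. Under the hypothesis, $\mathfrak m$-localisation kills those constituents, so the $\overline\Qm_l$-map vanishes; since the target $H^0(c,t)_{\mathfrak m}$ is torsion-free (Artin vanishing on the affine open stratum gives $H^i(c,t)=0$ for $i<0$ and free for $i=0$), the integral map vanishes as well. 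Freeness of $H^i(t)_{\mathfrak m}$ for $i\le 0$ then follows by descending induction, and the case $i>0$ is obtained from $i\le 0$ by Grothendieck--Verdier duality together with (\ref{eq-pp}). You never invoke duality, so your plan does not cover $i>0$ at all.

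Your alternative route --- produce a tempered $\Pi'$ from a torsion class via the Tate-module piece of $\widetilde H^i_{I^l}(V_{\xi,\OC})_{\mathfrak m}$ and Hochschild--Serre --- is exactly the step you yourself flag as ``the main obstacle'', and you do not close it. A nonzero Tate module contributes to $\widetilde H^i$ rationally, but nothing in (\ref{eq-sec-fond}) or (\ref{eq-ss-completee}) tells you that the resulting automorphic constituents are tempered; the cited propositions constrain degeneracy depth in terms of the stratum index $h$ and the degree $i$, but a torsion class sitting in $H^{i+1}_{I^l,\xi}(h)$ does not come with a clean $(h,i)$ label on the $\widetilde H^i$ side that forces $s=1$. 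In short, the contradiction you aim for is not established. Replace this detour by the direct observation about the $i=0$ map over $\overline\Qm_l$ and add the duality step for $i>0$.
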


\begin{proof}
We follow, in the spirit, the proof of the main theorem 4.7 of \cite{boyer-imj}. We use the short exact
sequence (\ref{eq-sec1}).
Over $\overline \Qm_l$, the only non trivial map between 
\begin{multline} \label{eq-ss0}
H^i(X_{I^l I_l,\bar s_v}, \lexp p j^{=t+1}_{!*} \FC_\xi(\mathds{1}_v,t+1))
\longrightarrow H^i(X_{I^lI_l,\bar s_v}, j^{=h}_! \FC_\xi(\mathds{1}_v,h))
\end{multline}
is for $i=0$ and corresponds to tempered representations. In particular, under our hypothesis,
all these maps are trivial after $\mathfrak m$-localisation.

Argue then by induction on $t$ from $d$ to $1$. The case $t=d$ is trivial as 
$X^{\geq d}_{I^lI_l,\bar s_v}$ is $0$-dimensional. Suppose now the result true for $t > t_0$.
As $X^{=t_0}_{I^lI_l,\bar s_v}$ is affine, then $H^i(X_{I^lI_l,\bar s_v},j^{=h}_! \FC_\xi(\mathds{1}_v,h))$ 
is trivial for every $i<0$ and free for $i=0$.
In particular the result is true for every $i<0$ and for $i=0$, it's true because the map in
(\ref{eq-ss0}) for $i=0$ and $t=t_0$, is, after $\mathfrak m$-localisation, trivial. 
We then deduce the result for $i>0$, using
Grothendieck-Verdier duality and the isomorphism (\ref{eq-pp}).
\end{proof}

\begin{defin}
Let $s_0 \geq s$ maximal such that there exists an irreducible automorphic representation $\Pi'$ of weight $\xi$ weakly
congruent to $\Pi$ and with level of the form $I'=I^lI'_l$ for some compact open subgroup $I'_l$ of $G(\Qm_l)$.
\end{defin}

\begin{corol}
Under our hypothesis, 
$\widetilde H^i_{I^l}(t)_{\mathfrak m}\simeq \widehat H^i_{I^l}(t)_{\mathfrak m}$ 
verify the following properties:
\begin{itemize}
\item It's trivial for every $t>s_0$.

\item For $t \leq s_0$, if it's non trivial then $i \geq t-s_0$, and it's non trivial for $i=t-s_0$.
\end{itemize}
\end{corol}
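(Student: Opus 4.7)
The plan is to combine the freeness result of the preceding proposition with the automorphic description of the Harris--Taylor perverse sheaves' cohomology stated in the two propositions taken from \cite{boyer-aif}. The key observation is that under the hypothesis of absurdity, contributions to the stratum cohomology are tightly constrained to come from representations weakly congruent to $\Pi$, whose degeneracy depth is bounded by $s_0$.

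First I would establish the identification $\widetilde H^i_{I^l}(t)_{\mathfrak m}\simeq \widehat H^i_{I^l}(t)_{\mathfrak m}$. The short exact sequence (\ref{eq-sec-fond}) is a formal consequence of taking limits of $l$-adic modules, so it transfers verbatim to the stratum cohomology, yielding
$$0 \to \widehat H^i_{I^l}(t)_{\mathfrak m} \to \widetilde H^i_{I^l}(t)_{\mathfrak m} \to T_l H^{i+1}_{I^l,\xi}(t)_{\mathfrak m} \to 0.$$
By the previous proposition, $H^{i+1}_{I^l,\xi}(t)_{\mathfrak m}$ is free under our hypothesis (note that the marginal case $h=1$, $i=0$ plays no role here since we only need the Tate module of $H^{i+1}$ to vanish), hence its Tate module vanishes and the outer terms agree.

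Next I would translate the vanishing/non-vanishing question to a rational one. Since $\widehat H^i_{I^l}(t)_{\mathfrak m}$ is the $l$-adic completion of a torsion-free module, it is determined by $H^i_{I^l,\xi}(t)_{\mathfrak m}\otimes_{\overline\Zm_l}\overline\Qm_l$. By the second proposition of \cite{boyer-aif} \S 3.2, every irreducible subquotient of this rational cohomology corresponds to a unique cohomological automorphic representation $\widetilde\Pi$ whose degeneracy depth $s'$ satisfies $s'\geq t$ and $t-s'\leq i\leq s'-t$. Passing to $\mathfrak m$-localization forces $\widetilde\Pi$ to be weakly congruent to $\Pi$ at some level of the form $I^l I_l$, and the very definition of $s_0$ forces $s'\leq s_0$. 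If $t>s_0$ there is no admissible $s'$ and the group is zero; if $t\leq s_0$ any non-trivial contribution yields $i\geq t-s'\geq t-s_0$.

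For the non-vanishing at $i=t-s_0$ when $t\leq s_0$, I would take an irreducible automorphic representation $\Pi'$ of weight $\xi$, depth exactly $s_0$, weakly congruent to $\Pi$ and of level $I^lI'_l$, which exists by definition of $s_0$. Its contribution to $H^\bullet_\xi(t)$ is described explicitly in \cite{boyer-aif}: the relevant terms are concentrated at the boundary indices of the interval $[t-s_0,s_0-t]$, so in particular $\Pi'$ contributes non-trivially at $i=t-s_0$, which transfers through the identification to $\widetilde H^{t-s_0}_{I^l}(t)_{\mathfrak m}\neq 0$. The main obstacle I anticipate is this last step: verifying that $\Pi'$ really appears at the extremal degree $i=t-s_0$ and not merely somewhere in the admissible range. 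The argument relies on unwinding the explicit description of the cohomology of Harris--Taylor local systems in terms of local Arthur packets, and on the fact that the Speh-like local component $\speh_{s_0}(\pi_v)$ of $\Pi'_v$ forces the Newton stratum of codimension $t$ to support $\Pi'$ precisely at the boundary of its range.
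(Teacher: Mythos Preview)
Your argument for the isomorphism $\widetilde H^i_{I^l}(t)_{\mathfrak m}\simeq \widehat H^i_{I^l}(t)_{\mathfrak m}$ and for the vanishing statements (the case $t>s_0$, and the bound $i\geq t-s_0$) is correct and in fact slightly more direct than the paper's: you read these off from the rational description of $H^i_\xi(t)$ in \cite{boyer-aif} together with freeness, whereas the paper argues by induction on $t$ using the short exact sequence (\ref{eq-sec1}).

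However, your treatment of the non-vanishing at $i=t-s_0$ has a genuine gap, and it is more serious than the one you flag. Even if you could verify that $\Pi'$ contributes to $H^{t-s_0}_{I^l,\xi}(t)_{\mathfrak m}\otimes\overline\Qm_l$, this does \emph{not} imply that the $l$-adic completion $\widehat H^{t-s_0}_{I^l}(t)_{\mathfrak m}$ is nonzero: a torsion-free module can have nonzero rationalisation and still be $l$-divisible, hence have trivial completion. Your sentence ``it is determined by $H^i_{I^l,\xi}(t)_{\mathfrak m}\otimes_{\overline\Zm_l}\overline\Qm_l$'' is fine for deducing vanishing, but not for deducing non-vanishing.

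The paper avoids this entirely by a two-step argument you do not use. First, the long exact sequence attached to (\ref{eq-sec1}) gives, for $i<0$ and after $\mathfrak m$-localisation, isomorphisms $H^i_{I^l,\xi}(t)_{\mathfrak m}\simeq H^{i+1}_{I^l,\xi}(t+1)_{\mathfrak m}$, so by descending induction on $t$ the non-vanishing at $(t,t-s_0)$ reduces to the single base case $(s_0,0)$. Second, at $t=s_0$ one already knows $\widetilde H^i_{I^l}(s_0)_{\mathfrak m}=0$ for all $i\neq 0$; if it also vanished at $i=0$, then every $E_2$-term of the Hochschild--Serre spectral sequence (\ref{eq-ss-completee}) would vanish, forcing $H^i(X^{\geq s_0}_{I^lI_l},V_{\xi,\overline\Zm_l})_{\mathfrak m}=0$ for every finite $I_l$ and every $i$, contradicting the existence of $\Pi'$ of depth $s_0$. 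This spectral-sequence step is what converts finite-level non-vanishing into completed non-vanishing and is the missing ingredient in your approach.
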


\rem recall from proposition \ref{prop-completee}, that these completed cohomology groups
are trivial for $i>0$.

\begin{proof}
The isomorphism follows from the previous proposition and the short exact sequence
(\ref{eq-sec-fond}).
If $t>s_0$, the result follows from the fact that for every $i$, we have 
$H^i_{I^l,\xi}(t)_{\mathfrak m}=(0)$.
For $t=s_0$, the same argument tells us that $\widetilde H^i_{I^l}(s_0)_{\mathfrak m}=(0)$
for all $i \neq 0$. If $\widetilde H^0_{I^l}(s_0)_{\mathfrak m}$ were trivial, then from the
spectral sequence (\ref{eq-ss-completee}), then all the 
$H^i(X^{s_0}_{I^lI_l},V_{\xi,\overline \Zm_l})_{\mathfrak m}$ would be trivial, which is not the case
for $i=0$.

Suppose now the result true for $t > t_0$. Recall from the previous proof, that the long exact sequence
associated to (\ref{eq-sec1}), gives for all $i<0$:
$$H^i(X_{I^lI_l,\bar s_v},\lexp p j^{=t}_{!*} \FC_\xi(\mathds{1}_v,t))_{\mathfrak m} \simeq
H^{i+1}(X_{I^lI_l,\bar s_v},\lexp p j^{=t+1}_{!*} \FC_\xi(\mathds{1}_v,t+1))_{\mathfrak m},$$
so we conclude by induction.
\end{proof}

Then we look at the cohomology groups $H^i_{I^l,\xi}(*,s_0-1)_{\mathfrak m}$ through the short exact sequence (\ref{eq-sec2})
where we recall that, thanks to our absurd hypothesis, the $H^i_{I^l,\xi}(h)_{\mathfrak m}$ are free.
In particular we have
$$\cdots \rightarrow H^0_{I^l,\xi}(s_0)_{\mathfrak m} \longrightarrow H^1_{I^l,\xi}(s_0-1)_{\mathfrak m} \longrightarrow 
H^1_{I^l,\xi}(*,s_0-1)_{\mathfrak m} \rightarrow \cdots$$
where, as over $\overline \Qm_l$,  
$$H^0_{I^l,\xi}(s_0)_{\mathfrak m} \otimes_{\overline \Zm_l} \overline \Qm_l \hookrightarrow H^1_{I^l,\xi}(s_0-1)_{\mathfrak m}
\otimes_{\overline \Zm_l} \overline \Qm_l$$
the first map $H^0_{I^l,\xi}(s_0)_{\mathfrak m} \longrightarrow H^1_{I^l,\xi}(s_0-1)_{\mathfrak m}$ is injective.
But we show that
\begin{itemize}
\item $H^0_{I^l,\xi}(s_0)_{\mathfrak m}$ is not divisible, cf. previous corollary;

\item $H^1_{I^l,\xi}(s_0-1)_{\mathfrak m}$ is divisible, cf. proposition \ref{prop-completee}.
\end{itemize}
We then deduce that the torsion of $H^1_{I^l,\xi}(*,s_0-1)_{\mathfrak m}$ is non trivial which is impossible because, as
$X^{=s_0-1}_{I^lI_l,\bar s_v}$ is affine, then  
$$H^i(X^{\geq s_0-1}_{I^lI_l,\bar s_v},j^{\geq s_0-1}_* \FC_\xi(\mathds{1},s_0-1))=(0) \quad \forall i>0.$$
We then conclude that our previous hypothesis was absurd and so the main theorem is proved.

\bibliographystyle{plain}
\bibliography{bib-ok}

\address{Universit\'e Paris 13, Sorbonne Paris Cit\'e \\
LAGA, CNRS, UMR 7539\\ 
F-93430, Villetaneuse (France) \\
PerCoLaTor: ANR-14-CE25}

\email{boyer@math.univ-paris13.fr}

\end{document}